\definecolor{red}{rgb}{1,0,0}
\definecolor{green}{rgb}{0,1,0}
\definecolor{blue}{rgb}{0,0,1}
\definecolor{refkey}{gray}{.625}
\definecolor{labelkey}{gray}{.625}
\newcommand{\id}{\operatorname{id}}
\theoremstyle{plain}
\newtheorem{thm}[equation]{Theorem}
\newtheorem{lem}[equation]{Lemma}
\newtheorem*{theorem*}{Theorem}
\theoremstyle{definition}
\newtheorem{defn}[equation]{Definition}
\newtheorem{example}[equation]{Example}
\newtheorem{def-prop}[equation]{Definition-Proposition}
\newtheorem{prop}[equation]{Proposition}
\newtheorem*{prop*}{Proposition}
\newtheorem{prop-def}[equation]{Proposition-Definition}
\newtheorem{Rem}[equation]{Remark}
\def\id{\operatorname{id}}
\def\Hom{\operatorname{Hom}}
\def\Obj{\operatorname{Obj}}
\def\tot{\operatorname{tot}}
\def\sh{\operatorname{sh}}
\begin{document}

\title{Transgression maps for crossed modules of groupoids}

\author{Xiongwei Cai}
\address{Chern Institute of Mathematics, Nankai University, Tianjin}
\email{\href{mailto:shernvey@nankai.edu.cn}{shernvey@nankai.edu.cn}}

\begin{abstract}
Given a crossed module of groupoids $N\rightarrow G$,
we construct (1) a natural homomorphism
from the product groupoid
$\mathbb{Z}\times(N\rtimes G)\rightrightarrows N$
to the crossed product groupoid
$N\rtimes G\rightrightarrows N$
and (2) a transgression map
from the singular cohomology $H^\ast(G_\bullet,\mathbb{Z})$
of the nerve of the groupoid $G$
to the singular cohomology
$H^{\ast-1}\big((N\rtimes G)_\bullet,\mathbb{Z}\big)$
of the nerve of the crossed product groupoid $N\rtimes G$.
The latter turns out to be identical to the transgression map
obtained by Tu--Xu in their study of equivariant
$K$-theory.
\end{abstract}

\maketitle


\section{Introduction}
Transgression maps have appeared in various forms in the literature.
Brylinski--McLaughlin studied the geometric meaning of the transgression map $T_1:\Omega_G^4(\bullet)\rightarrow\Omega^3_G(G)$ for a crossed module of groups $G\xrightarrow{\id}G$ in~\cite{BM}.
Dijkgraaf--Witten used the trangression map
$T_1:H^4_G(\bullet,\mathbb{Z})\rightarrow H^3_G(G,\mathbb{Z})$
to explore the geometric correspondence between Chern-Simons
functionals and Wess-Zumino-Witten models in~\cite{DW}.

In the present paper, we investigate the transgression map
$T_1:H^\ast(G_\bullet,\mathbb{Z})\rightarrow
H^{\ast-1}((N\rtimes G)_\bullet,\mathbb{Z})$
arising from a crossed module of groupoids $N\xrightarrow{\varphi}G$,
This map was first constructed by Tu-Xu~\cite{TuXu}
who used it to produce the multiplicators which they employed
in their study of the ring structure on equivariant twisted K-theory.
Tu-Xu constructed the transgression map $T_1$
by means of a somewhat intricate combinatorial method
obscuring the key ideas.

In this paper, we give a more concrete construction.
The composition of
\begin{itemize}
\item the natural embedding of the component $(N\rtimes G)_{p-1}$
of the nerve of the groupoid $N\rtimes G$
into $\mathbb{Z}_1\times (N\rtimes G)_{p-1}$;
\item the Eilenber--MacLane map from
$\mathbb{Z}_1\times (N\rtimes G)_{p-1}$
to $(\mathbb{Z}\times (N\rtimes G))_p$;
\item the natural groupoid homomorphism
from the product groupoid
$\mathbb{Z}\times (N\rtimes G)\rightrightarrows N$
to the crossed product groupoid
$N\rtimes G\rightrightarrows N$;
\item and the projection of $N\rtimes G$ onto $G$
\end{itemize}
is a map from $(N\rtimes G)_{p-1}$ to $G_p$.
The induced map on the cohomology level
$\Phi:H^{\ast}(G_\bullet,\mathbb{Z})\rightarrow
H^{\ast-1}((N\rtimes G)_\bullet,\mathbb{Z})$
turns out to be the transgression map $T_1$
constructed by Tu-Xu~\cite{TuXu}

\subsection*{Acknowledgements}
The author would like to thank Mathieu Sti\'enon and Ping Xu for useful comments and helpful suggestions on earlier versions of the paper. Special thanks go to Gr\'{e}gory Ginot for inspiring discussions. The author is also grateful to the Pennsylvania State University for its hospitality during his stay in 2019. This work is partially supported by NSFC grant 11425104 and Nankai Zhide Foundation.

\section{Preliminary}
We recall some basic notations and facts concerning the transgression maps for crossed modules of Lie groupoids.
\subsection{Simplicial manifolds and singular cohomology}
In this subsection, we recall the definition of simplicial manifolds and singular cohomology. We refer the reader to~\cite{Dupont,Xu} for more details.

Consider the \emph{simplicial category} $\Delta$ whose objects
are the finite ordered sets $[n]=\{0<1<\cdots<n\}$ (for $n\in\mathbb{N}$)
and whose morphisms are the nondecreasing functions.

There are two kinds of basic morphisms in $\Delta$:
the face maps and the degeneracy maps.
For each $i\in\{0,1,\cdots,n\}$, the \emph{face map}
$\varepsilon^{i}:[n-1]\rightarrow[n]$ is defined by
\[ \varepsilon^{i}(j)=\begin{cases}
j & \text{if } j<i,\\
j+1 & \text{if } j\geqslant i .\end{cases} \]
For each $i\in\{0,1,\cdots,n\}$, the \emph{degeneracy map}
$\eta^{i}:[n+1]\rightarrow[n]$ is defined by
\[ \eta^{i}(j)=\begin{cases}
j & \text{if } j\leqslant i, \\
j-1 & \text{if } j>i .\end{cases} \]
The face and degeneracy maps satisfy the commutation relations
\[ \varepsilon^{j}\varepsilon^{i}=\varepsilon^{i}\varepsilon^{j-1}
\quad\text{if }i<j,
\qquad
\eta^{j}\eta^{i}=\eta^{i}\eta^{j+1}\quad\text{if }i\leqslant j, \]
and
\[ \eta^{j}\varepsilon^{i}=\begin{cases}
\varepsilon^{i}\eta^{j-1} & \text{if } i<j, \\
\id & \text{if } i\in\{j,j+1\}, \\
\varepsilon^{i-1}\eta^{j} & \text{if } i>j+1 .
\end{cases} \]

A \emph{simplicial set} is defined to be a contravariant functor from the simplicial category $\Delta$ to the category of sets.
A \emph{simplicial (topological) space} is defined to be a contravariant functor from the simplicial category $\Delta$ to the category of topological spaces.
A \emph{simplicial manifold} is defined to be a contravariant functor from the simplicial category $\Delta$ to the category of manifolds.

The notation $M_\bullet$ is often used to denote a simplicial
set, space, or manifold and the set, space, or manifold
corresponding to the object $[k]$ of $\Delta$
is customarily denoted $M_k$.
The morphisms
$\overline{\varepsilon}^i:M_n\to M_{n-1}$ and $\overline{\eta}^i:M_n\to M_{n+1}$, images of the face and degeneracy maps $\varepsilon^i:[n-1]\to[n]$
and $\eta^i:[n+1]\to[n]$,
are called face and degeneracy operators.

Similarly, \emph{cosimplical sets, spaces, and manifolds} are defined
to be covariant functors from the simplicial category $\Delta$ to the categories of sets, topological spaces, and manifolds, respectively.
The notation $M^\bullet$ is often used to denote a cosimplicial
set, space, or manifold.
The morphisms $\underline{\varepsilon}^i:M^{n-1}\to M^n$
and $\underline{\eta}^i:M^{n+1}\to M^n$,
images of the face and degeneracy maps
$\varepsilon^i:[n-1]\to[n]$ and $\eta^i:[n+1]\to[n]$,
are called face and degeneracy operators.

\begin{example}
  Given any Lie groupoid $G\rightrightarrows G_0$, there is associated a simplicial manifold $G_\bullet$, called the \emph{nerve}, with
\[ G_k=\{(g_1,g_2,\cdots,g_k)|g_i\in G,1\leqslant i\leqslant k, g_1g_2\cdots g_k\ \mbox{makes sense}\} \]
being the manifold of composable $n$-tuples. The face operators $\overline{\varepsilon}^i:G_{n}\rightarrow G_{n-1}$ are given by
\[
\overline{\varepsilon}^i(g_1,\cdots,g_{n})=\begin{cases}
  (g_2,\cdots,g_{n}) & \text{if } i=0,\\
  (g_1,\cdots,g_{i-1},g_i g_{i+1},g_{i+2},\cdots,g_{n})
& \text{if } 0<i<n,\\
  (g_1,\cdots,g_{n-1}) & \text{if } i=n,
\end{cases}
\]
while the degeneracy operators $\overline{\eta}^i:G_{n}\rightarrow G_{n+1}$ are given by
\[
\overline{\eta}^i(g_1,\cdots,g_{n})=\begin{cases}
  (\varepsilon(s(g_1)),g_1,\cdots,g_{n}) & \text{if } i=0,\\
  (g_1,\cdots,g_{i},\varepsilon(t(g_i)),g_{i+1},\cdots,g_{n})
& \text{if } 0<i\leqslant n,
\end{cases}
\]
where $\varepsilon:G_0\rightarrow G$ is the unit map of the groupoid $G\rightrightarrows G_0$.
\end{example}

\begin{example}
\begin{itemize}
\item[(1)] For any $n\in \mathbb{Z}$, the \emph{standard n-simplex} $\Delta^n$ defined by
  \[ \Delta^n:=\{(t_0,\cdots,t_n)|\sum_{i=0}^{n}t_i=1,t_0\geqslant 0,\cdots,t_n\geqslant 0\} \]
  can be thought of as a simplicial manifold $(\Delta^n)_\bullet$:
  \[ (\Delta^n)_k:=\begin{cases}
    \Delta^{k+1} & \text{if }k<n,\\
    \{(t_0,\cdots,t_n,0,\cdots,0)|\sum_{i=0}^{n}t_i=1,t_0\geqslant 0,\cdots,t_n\geqslant 0\} & \text{if } k\geqslant n,
  \end{cases} \]
 and the face operators $\overline{\varepsilon}^i:(\Delta^n)_k\rightarrow (\Delta^n)_{k-1}$ and the degeneracy operators $\overline{\eta}^i:(\Delta^n)_k\rightarrow (\Delta^n)_{k+1}$,$i=0,1,\cdots,k$, are given repectively by
 \begin{align*}
   \overline{\varepsilon}^i(t_0,\cdots,t_{k+1})&=(t_0,\cdots,t_i+t_{i+1},\cdots,t_{k+1}),\\
   \overline{\eta}^i(t_0,\cdots,t_{k+1})&=(t_0,\cdots,t_i,0,t_{i+1},\cdots,t_{k+1}).
 \end{align*}
 \item[(2)] Furthermore, the collection of standard simplices $\Delta^\bullet$ can be viewed as a cosimplicial manifold. The face operators $\underline{\varepsilon}^i:\Delta^{k-1}\rightarrow \Delta^{k}$ and the degeneracy operators $\underline{\eta}^i:\Delta^{k+1}\rightarrow \Delta^{k}$,$i=0,1,\cdots,k$, are given repectively by
\begin{align*}
\underline{\varepsilon}^i(t_0,\cdots,t_{k-1})&=(t_0,\cdots,t_{i-1},0,t_{i},\cdots,t_{k-1}),\\
\underline{\eta}^i(t_0,\cdots,t_{k+1})&=(t_0,\cdots,t_i+t_{i+1},\cdots,t_{k+1}).
 \end{align*}
 \end{itemize}
\end{example}

Given a simplicial manifold $M_\bullet$, a \emph{(smooth) singular $q$-simplex} in $M_p$ is defined to be a smooth map $\sigma$ from the standard $q$-simplex $\Delta^q$ to $M_p$. Denote by $S_q^\infty(M_p)$ the set of all smooth singular $q$-simplices. It is clear that the cosimplicial manifold structure on $\Delta^\bullet$ induces a simplicial manifold structure on $S_\bullet^\infty(M_p)$. The face and degeneracy operators of the latter, still denoted by $\overline{\varepsilon}^i$ and $\overline{\eta}^i$ by abuse of notations, are obtained by pullback along $\underline{\varepsilon}^i$ and $\underline{\eta}^i$:
\begin{align*}
  \overline{\varepsilon}^i:S_q^\infty(M_p)\rightarrow S_{q-1}^\infty(M_p),\qquad &\overline{\eta}^i:S_q^\infty(M_p)\rightarrow S_{q+1}^\infty(M_p),\\
  \overline{\varepsilon}^i(\sigma)=(\underline{\varepsilon}^i)^\ast\sigma=\sigma\circ\underline{\varepsilon}^i,\qquad &\overline{\eta}^i(\sigma)=(\underline{\eta}^i)^\ast\sigma=\sigma\circ\underline{\eta}^i.
\end{align*}
Let $C_q(M_p,\mathbb{Z}):=\mathbb{Z}[S_q^\infty(M_p)]$ be the free $\mathbb{Z}$-module generated by singular $q$-simplices, i.e. elements of the finite formal sum
\[ \sum_{\sigma\in S_q^\infty(M_p)}a_\sigma\sigma,\quad a_\sigma\in \mathbb{Z} .\]
$C_\bullet(M_p,\mathbb{Z})$ becomes a chain complex under the boundary operator
\[ \delta:C_q(M_p,\mathbb{Z})\rightarrow C_{q-1}(M_p,\mathbb{Z}) \]
defined by the alternate sum
\[ \delta:=\sum_{i=0}^q(-1)^i\overline{\varepsilon}^i=\sum_{i=0}^q(-1)^i(\underline{\varepsilon}^i)^\ast .\]
In fact $\delta$ can be viewed as the vertical boundary operator of a double complex $C_\bullet(M_\bullet,\mathbb{Z})$. And the horizontal boundary operator of the double complex
\[ \partial:C_q(M_p,\mathbb{Z})\rightarrow C_{q}(M_{p-1},\mathbb{Z}) \]
is given by the alternate sum
\[ \partial:=\sum_{j=0}^p(-1)^j(\overline{\varepsilon}^j)_\ast ,\]
where $(\overline{\varepsilon}^j)_\ast:\ C_q(M_p,\mathbb{Z})\rightarrow C_{q}(M_{p-1},\mathbb{Z})$ is the chain map induced by the face operators $\overline{\varepsilon}^j:M_p\rightarrow M_{p-1}$. The total differential is $(-1)^p\delta+\partial$.
The corresponding hyperhomology is denoted by $H_\bullet(M_\bullet,\mathbb{Z})$, and called the \emph{(smooth) singular homology} of the simplicial manifold $M_\bullet$ with coefficients in $\mathbb{Z}$.

Dually, let $C^q(M_p,\mathbb{Z})$ be the space of smooth singular $q$-cochains:
\[ C^q(M_p,\mathbb{Z}):=\Hom_{\mathbb{Z}}(C_q(M_p,\mathbb{Z}),\mathbb{Z}) .\]
Then $C^\bullet(M_\bullet,\mathbb{Z})$ becomes a double cochain complex, with vertical coboundary operator $\delta^\ast$ and horizontal coboundary operator $\partial^\ast$. The total differential is denoted by
\[ D=(-1)^p\delta^\ast+\partial^\ast .\]
The corresponding hypercohomology is denoted by $H^\bullet(M_\bullet,\mathbb{Z})$, and called the \emph{(smooth) singular cohomology} of the simplicial manifold $M_\bullet$ with coefficients in $\mathbb{Z}$ --- see~\cite{Dupont,Xu}.

\subsection{Crossed modules of Lie groupoids and transgression maps}
In this subsection, we recall the definition of crossed modules of Lie groupoids. Then we rewrite Tu \& Xu's result on transgression maps in terms of singular cohomology. We refer the reader to \cite{MoMr,TuXu} for more details.

First, we recall the definition of an action of a groupoid.
\begin{defn}
A \emph{(right) action of a groupoid} $G\rightrightarrows G_0$ on a space $X$ is given by
\begin{itemize}
\item[(1)] a map $J:X\rightarrow G_0$, called the momentum map;
\item[(2)] a map $(x,g)\mapsto x^g$
from $X\times_{G_0}G:=\{(x,g)\in X\times G|J(x)=t(g)\}$ to $X$ such that
\[ J(x^g)=s(g), \qquad x^{gh}=(x^g)^h, \quad\text{and}\quad x^{J(x)}=x \]
whenever $J(x)=t(g)$ and $s(g)=t(h)$.
\end{itemize}
When endowed with such an action, we say that the space $X$
is a \emph{(right) $G$-space}.
\end{defn}
Then the \emph{transformation groupoid} (also called \emph{crossed product groupoid}) $X\rtimes G\rightrightarrows X$ is defined as follows: $(X\rtimes G)_0=X$, $(X\rtimes G)_1=X\times_{G_0}G$, and the source, target and multiplication maps are given by
\[ s(x,g)=x^g,\quad t(x,g)=x,\quad (x,g)(x^g,h)=(x,gh) .\]

Suppose $N\rightrightarrows N_0$ and $G\rightrightarrows G_0$ are groupoids. We say that \emph{$G\rightrightarrows G_0$ acts on $N\rightrightarrows N_0$ by automorphisms} if both $N_0$ and $N_1$ are (right) $G$-spaces and the actions are compatible in the following sense:
  \begin{itemize}
    \item[(1)] the source and target maps of $N$ are $G$-equivariant,
    \item[(2)] $x^gy^g=(xy)^g$ for all $(x,y,g)\in N\times N \times G$ whenever either side makes sense.
  \end{itemize}
\begin{defn}
A \emph{crossed module of groupoids} is a groupoid homomorphism
  \[
\xymatrix{N\ar[r]^{\varphi}\ar@<-.5ex>[d]\ar@<.5ex>[d] & G\ar@<-.5ex>[d]\ar@<.5ex>[d]\\
N_0\ar[r]^{=} & G_0
}
\]
   where $N\rightrightarrows N_0$ is a bundle of groups, together with an action of $G\rightrightarrows G_0$ on $N\rightrightarrows N_0$ by automorphisms such that
  \begin{itemize}
    \item[(1)] $\varphi(x^g)=\varphi(x)^g$  for all $x\in N$ and $g\in G$ such that $x^g$ makes sense,
    \item[(2)] $x^{\varphi(y)}=x^y$ for all composable pairs $(x,y)\in N_2$,
  \end{itemize}
  where $\varphi(x)^g:=g^{-1}\varphi(x)g$ and $x^y:=y^{-1}xy$.
\end{defn}
Given a crossed module of groupoids $N\xrightarrow{\varphi}G$, let $N\rtimes G\rightrightarrows N$ be the associated crossed product groupoid. Its nerve $(N\rtimes G)_{\bullet}$ is indeed isomorphic to $N\rtimes G_\bullet$ as spaces, where
\[ N\rtimes G_l:=\{(x,g_1,\cdots,g_l)\in N\times G^l|t(x)=t(g_1),g_1\cdots g_l\ \mbox{makes sense}\} .\]
The bijection is given as follows
\[ (N\rtimes G)_l \ni
\big((x,g_1),(xg_1,g_2),\cdots,(xg_1\cdots g_{l-1},g_l)\big)
\longleftrightarrow (x,g_1,\cdots,g_l)
\in N\rtimes G_l. \]

As a bridge between $H^\ast(G_\bullet;\mathbb{Z})$, the singular cohomology of the nerve $G_\bullet$, and $H^\ast((N\rtimes G)_\bullet,\mathbb{Z})$, the singular cohomology of the nerve $(N\rtimes G)_\bullet$, Tu-Xu~\cite{TuXu} constructed a transgression map
\[
T_{1}:H^{\ast}(G_\bullet,\mathbb{Z})\rightarrow H^{\ast-1}((N\rtimes G)_\bullet,\mathbb{Z}),
\]
which is given on the cochain level by the following theorem:
\begin{thm}[Tu $\&$ Xu \cite{TuXu}]\label{TuXuResult}
Let $N\xrightarrow{\varphi}G$ be a crossed module of groupoids,
and let $N\rtimes G\rightrightarrows N$ be
the corresponding crossed product groupoid.
The alternating sum
$T_1=\sum_{i=0}^{n-1}(-1)^i\tilde{f}_i^\ast$
of the pullbacks by the maps
$\tilde{f}_i:N\rtimes G_{p-1}\rightarrow G_p$ given by
\[ \tilde{f}_i(x,g_1,\cdots,g_{p-1})=(g_1,\cdots,g_i,\varphi(x)^{g_1\cdots g_i},g_{i+1},\cdots,g_{p-1}) \]
defines a cochain map
\[ T_1:C^q(G_p,\mathbb{Z})\rightarrow
C^q((N\rtimes G)_{p-1},\mathbb{Z}) \]
and hence a transgression map
\[ T_{1}:H^{\ast}(G_\bullet,\mathbb{Z})\rightarrow
H^{\ast-1}((N\rtimes G)_\bullet,\mathbb{Z}) .\]
\end{thm}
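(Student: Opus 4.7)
The plan is to prove the theorem by direct computation, verifying that $T_1$ is compatible with both differentials of the double complex $C^\bullet(M_\bullet,\mathbb{Z})$. Since each $\tilde{f}_i^*$ is pullback along a smooth map, it commutes tautologically with the vertical differential $\delta^*$ (itself a pullback along face maps of the standard simplices), so the content of the theorem reduces to the horizontal identity
\[
\partial^*\circ T_1 + T_1\circ\partial^* = 0
\]
(the minus sign appears because $T_1$ has bidegree $(-1,0)$ and the total differential is $D=\partial^*+(-1)^p\delta^*$); dually, on chains this is $\partial_G\circ t_1 = -t_1\circ\partial_{N\rtimes G}$ with $t_1=\sum_{i=0}^{p-1}(-1)^i\tilde{f}_i$. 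Before tackling it I would first check that $\tilde{f}_i(x,g_1,\ldots,g_{p-1})$ genuinely lies in $G_p$: as $\varphi$ is a groupoid morphism and the $G$-action on $N$ has $t$ as its momentum map, $\varphi(x)^{g_1\cdots g_i}$ is a loop at $s(g_i)=t(g_{i+1})$, making the inserted tuple composable.

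The heart of the argument is the expansion of the double sum $\sum_{i,j}(-1)^{i+j}\,\overline{\varepsilon}^j_G\circ\tilde{f}_i$ by cases on the position of $j$ relative to $i$, bearing in mind that $\varphi(x)^{g_1\cdots g_i}$ occupies slot $i+1$ of the output of $\tilde{f}_i$:
\begin{itemize}
\item For $0\le j\le i-1$, one checks $\overline{\varepsilon}^j_G\tilde{f}_i = \tilde{f}_{i-1}\overline{\varepsilon}^j_{N\rtimes G}$; the sub-case $j=0$ uses the crossed module axiom $\varphi(x^g)=\varphi(x)^g$ to identify $\varphi(x)^{g_1\cdots g_i}$ with $\varphi(x^{g_1})^{g_2\cdots g_i}$.
\item For $i+2\le j\le p$, one checks $\overline{\varepsilon}^j_G\tilde{f}_i = \tilde{f}_i\overline{\varepsilon}^{j-1}_{N\rtimes G}$.
\item For $j\in\{i,i+1\}$, the face operator collapses an adjacent $g$-entry into $\varphi(x)^{g_1\cdots g_i}$; the elementary group-theoretic identity $g\cdot n^g=n\cdot g$ then implies $\overline{\varepsilon}^i_G\tilde{f}_i=\overline{\varepsilon}^i_G\tilde{f}_{i-1}$ and $\overline{\varepsilon}^{i+1}_G\tilde{f}_i=\overline{\varepsilon}^{i+1}_G\tilde{f}_{i+1}$, so these pairs cancel in the alternating sum (signs $(-1)^{2i}$ versus $(-1)^{2i-1}$ at $j=i$, and analogously at $j=i+1$).
\item The two endpoint contributions $(i,j)=(0,0)$ and $(p-1,p)$ both degenerate to the projection $(x,g_1,\ldots,g_{p-1})\mapsto(g_1,\ldots,g_{p-1})$ and appear with opposite overall signs $+1$ and $(-1)^{2p-1}=-1$, so they cancel as well.
\end{itemize}
Reindexing the surviving generic terms ($i\mapsto i-1$ in the first case, $j\mapsto j-1$ in the second) assembles them into exactly $-\sum_{i,j}(-1)^{i+j}\tilde{f}_i\overline{\varepsilon}^j_{N\rtimes G}=-t_1\circ\partial_{N\rtimes G}$, which is the required identity.

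The main obstacle will be the careful bookkeeping in the diagonal cases $j\in\{i,i+1\}$, where the crossed module structure visibly enters through the identity $g\cdot n^g=n\cdot g$. Once one observes that the potential ``Leibniz error'' coming from the insertion of $\varphi(x)^{g_1\cdots g_i}$ is precisely annihilated by these diagonal pairings (together with the endpoint cancellation at $(0,0)$ and $(p-1,p)$), the remainder of the verification is purely combinatorial sign and index tracking.
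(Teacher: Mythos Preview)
Your direct verification is correct: the case analysis on the relative position of $j$ and the inserted slot $i{+}1$ is exactly right, the diagonal cancellations $(i,i)\leftrightarrow(i{-}1,i)$ together with the endpoint pair $(0,0)\leftrightarrow(p{-}1,p)$ dispose of all the ``bad'' terms, and the reindexing of the two off-diagonal blocks assembles precisely $-t_1\circ\partial_{N\rtimes G}$. (A minor redundancy: your two identities $\overline{\varepsilon}^i_G\tilde f_i=\overline{\varepsilon}^i_G\tilde f_{i-1}$ and $\overline{\varepsilon}^{i+1}_G\tilde f_i=\overline{\varepsilon}^{i+1}_G\tilde f_{i+1}$ describe the \emph{same} pairing from the two sides, so you are not double-cancelling.)

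This is, however, not how the present paper establishes the statement. The paper quotes the theorem as a result of Tu--Xu and does not reprove it by your kind of face-map bookkeeping. Instead, its contribution (Section~4) is to build a map $\Phi$ as the composite
\[
(N\rtimes G)_{p-1}\xrightarrow{\iota}\mathbb{Z}_1\times(N\rtimes G)_{p-1}\xrightarrow{\text{EM}}(\mathbb{Z}\times(N\rtimes G))_p\xrightarrow{\rho}(N\rtimes G)_p\xrightarrow{\pi}G_p,
\]
where each ingredient (the groupoid morphisms $\rho,\pi$, the Eilenberg--MacLane shuffle map, and cap product with the generator of $H_1(\mathbb{Z}_\bullet)$) is a chain map for structural reasons. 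An explicit evaluation of the $(1,p{-}1)$-shuffles then shows $\Phi=\sum_{i=1}^{p}(-1)^i f_i^\ast$ with $f_i=\tilde f_{i-1}$, i.e.\ $\Phi=-T_1$. Thus the paper obtains the cochain-map property of $T_1$ as a corollary of a conceptual factorisation, rather than by your simplicial combinatorics. Your argument is the elementary, self-contained route (and is presumably close in spirit to Tu--Xu's original proof); the paper's route explains \emph{why} the alternating-insertion formula appears, at the cost of invoking Eilenberg--Zilber and the groupoid homomorphism of Lemma~\ref{groupoidhom}.
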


\section{Natural groupoid homomorphisms}

Given any groupoid $G\rightrightarrows G_{0}$, there is a crossed
module $SG\xrightarrow{i} G$ associated to $G\rightrightarrows G_{0}$: $SG:=\{g\in G|s(g)=t(g)\}$
is the space of closed loops in $G$, $i$ is the inclusion of $SG$
to $G$, and $G$ acts on $SG$ (from right) by conjugation ($a\cdot g:=g^{-1}ag,\ a\in SG, g\in G$).
The crossed product groupoid $SG\rtimes G\rightrightarrows SG$ is called the \emph{inertia groupoid} of $G$, denoted by $\Lambda G$.

Given a crossed module $N\xrightarrow{\varphi}G$, since $\varphi$
maps $N$ into $SG$, we have a natural crossed module morphism from
$N\xrightarrow{\varphi}G$ to $SG\xrightarrow{i}G$:
\[ \xymatrix{N\ar[r]^{\varphi}\ar[d]_{\varphi} & SG\ar[d]^{i}\\
G\ar[r]^{=} & G } \]

\begin{prop}
Let $G\rightrightarrows G_0$ be any groupoid. The group of integers $\mathbb{Z}$ can be viewed as a groupoid $\mathbb{Z}\rightrightarrows\bullet$ (still denoted by $\mathbb{Z}$ in the sequel by abuse of notation). There is a natural groupoid
structure on the hom-set $\Hom(\mathbb{Z},G)$ of groupoid homomorphisms.
And the inertia groupoid $\Lambda G$ is isomorphic to $\Hom(\mathbb{Z},G)$.\end{prop}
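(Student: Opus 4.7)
The plan is to spell out what $\Hom(\mathbb{Z},G)$ looks like both as a set and with its natural groupoid structure of arrows, then to exhibit an explicit isomorphism with $\Lambda G=SG\rtimes G$.

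Since $\mathbb{Z}\rightrightarrows\bullet$ is the free groupoid on the single loop $1$, a groupoid homomorphism $\phi:\mathbb{Z}\to G$ is completely determined by the element $\phi(1)\in G$, which must satisfy $s(\phi(1))=t(\phi(1))$. Hence $\phi\mapsto\phi(1)$ gives a canonical bijection between the set of objects of $\Hom(\mathbb{Z},G)$ and $SG$. To equip $\Hom(\mathbb{Z},G)$ with the groupoid structure coming from the functor category, I declare an arrow $\eta:\phi_1\to\phi_2$ to be a natural transformation: since $\mathbb{Z}$ has a single object $\bullet$, such a natural transformation amounts to a single element $g=\eta_\bullet\in G$ with $t(g)=\phi_1(\bullet)$ and $s(g)=\phi_2(\bullet)$, subject to the naturality relation applied to the generator $1$, which boils down to $\phi_2(1)=g^{-1}\phi_1(1)g=\phi_1(1)^g$. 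Vertical composition of natural transformations together with identity natural transformations give the claimed groupoid structure on $\Hom(\mathbb{Z},G)$.

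The isomorphism with $\Lambda G$ is then visible by direct inspection: define $F:\Hom(\mathbb{Z},G)\to\Lambda G$ on objects by $F(\phi)=\phi(1)$ and on arrows by $F(\eta:\phi_1\to\phi_2)=(\phi_1(1),g)\in SG\times_{G_0}G$. The naturality relation $\phi_2(1)=\phi_1(1)^g$ is exactly the condition that $(\phi_1(1),g)$ is an arrow in $\Lambda G$ with target $\phi_1(1)$ and source $\phi_1(1)^g=\phi_2(1)$, consistent with $F$ on objects. For composability, two composable natural transformations $\phi_1\xrightarrow{g_1}\phi_2\xrightarrow{g_2}\phi_3$ compose to the one encoded by $g_1 g_2$, and on the $\Lambda G$-side one has $(\phi_1(1),g_1)(\phi_1(1)^{g_1},g_2)=(\phi_1(1),g_1 g_2)$, so $F$ respects composition. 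Units and inverses are immediate, and bijectivity on objects and on arrows is built into the description.

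The only real subtlety, and the step I would verify most carefully, is to keep the direction of natural transformations consistent with the right-action convention $x\cdot g=g^{-1}xg$ used in defining $\Lambda G$; once the conventions are aligned, everything reduces to a direct unwinding of the definitions, with no serious obstacle.
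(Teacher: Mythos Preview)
Your proposal is correct and follows essentially the same route as the paper: identify objects of $\Hom(\mathbb{Z},G)$ with $SG$ via $\phi\mapsto\phi(1)$, identify arrows with pairs $(a,g)$ subject to the conjugation relation coming from naturality, and check that the structure maps match those of $\Lambda G=SG\rtimes G$. The only differences are cosmetic: the paper first sets up the groupoid structure on $\Hom(H,G)$ for a general groupoid $H$ before specializing to $H=\mathbb{Z}$, and it uses the $\Leftarrow$ orientation for natural transformations so that the map to $\Lambda G$ is manifestly covariant---precisely the convention alignment you flagged as the one point to verify carefully.
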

\begin{proof}
In fact, given any two groupoids $H$ and $G$, there is a natural
groupoid structure on $\Hom(H,G)$:
\begin{itemize}
\item the objects of $\Hom(H,G)$ are groupoid homomorphisms from $H$ to $G$, or equivalently, functors from $H$ to $G$ if $H$ and $G$ are viewed as categories;
\item the arrows in $\Hom(H,G)$ are natural transformations between functors;
\item the source and target of an arrow $\eta:\beta\Leftarrow \alpha$ are $\alpha$ and $\beta$, respectively;
\item the unit map sends a functor $\alpha\in\Obj(\Hom(H,G))$ to a natural isomorphism $1_{\alpha}:\alpha\Leftarrow \alpha$, whose component at $x\in H_{0}$ is the identity arrow $1_{\alpha(x)}\in G$, i.e.
$(1_{\alpha})_{x}=1_{\alpha(x)}$;
\item the inverse map sends a natural transformation $\eta:\beta\Leftarrow \alpha$
to the natural transformation $\eta^{-1}:\alpha\Leftarrow \beta$, whose component at $x\in H_{0}$ is the inverse of the component of $\eta$
at $x\in H_{0}$, i.e. $(\eta^{-1})_{x}=(\eta_{x})^{-1}$;
\item the multiplication of $\eta^{\prime}:\gamma\Leftarrow \beta$ and $\eta:\beta\Leftarrow \alpha$ is the composition $\eta^{\prime}\circ\eta:\gamma\Leftarrow \alpha$, whose component at $x\in H_{0}$ is the multiplication of the components of $\eta^{\prime}$ and $\eta$ at $x\in H_{0}$, i.e. $(\eta^{\prime}\circ\eta)_{x}=\eta_{x}^{\prime}\eta_{x}$.
\end{itemize}
We can easily verify that $\Hom(H,G)$ becomes a groupoid with the struture maps above.

Next, we focus on the groupoid $\Hom(\mathbb{Z},G)$.
\begin{itemize}
\item Let $\alpha$ be an object of $\Hom(\mathbb{Z},G)$, i.e. a groupoid homomorphism from
$\mathbb{Z}$ to $G$. Since for any positive integer $n$,
$\alpha(n)=\alpha(1+\cdots+1)=\alpha(1)\cdot\cdots\cdot \alpha(1)$,
$\alpha(-n)=(\alpha(n))^{-1}$ and $\alpha(0)=1_{s(\alpha(1))}$, $\alpha$ is
uniquely determined by $\alpha(1)\in SG$ (note that $s(\alpha(1))=\alpha(s(1))=\alpha(t(1))=t(\alpha(1))$). Conversely, given any $a\in SG$, the map
$\alpha:n\mapsto a^{n}\ (a^{0}:=1_{s(a)})$ defines a groupoid homomorphism
from $\mathbb{Z}$ to $G$. So there is a bijection between objects of $\Hom(\mathbb{Z},G)$ and $\Lambda G$.
\item Let $\eta:\beta\Leftarrow \alpha$ be an arrow of $\Hom(\mathbb{Z},G)$, i.e. a natural transformation from $\alpha$ to $\beta$. Given any arrow $\bullet \xrightarrow{n}\bullet$ of $\mathbb{Z}$ ($\bullet$ denotes the unique object of $\mathbb{Z}$), by definition, the following diagram commutes
\[ \xymatrix{\alpha(\bullet)\ar[r]^{\alpha(n)}\ar[d]_{\eta_{\bullet}} & \alpha(\bullet)\ar[d]^{\eta_{\bullet}}\\
\beta(\bullet)\ar[r]^{\beta(n)} & \beta(\bullet), } \]
i.e. $\eta_{\bullet}(\alpha(1))^{n}=(\beta(1))^{n}\eta_{\bullet}$, which is equivalent to $\eta_{\bullet}\alpha(1)=\beta(1)\eta_{\bullet}$ and further implies that $(\beta(1),\eta_{\bullet})$ is in $(\Lambda G)_1=SG\rtimes G$. Conversely, given any $(b,g)\in (\Lambda G)_1=SG\rtimes G$, there exists a unique arrow $\eta:\beta\Leftarrow \alpha$
of $\Hom(\mathbb{Z},G)$ such that $\beta(1)=b,\ \alpha(1)=g^{-1}bg$ and $\eta_{\bullet}=g$. Therefore, there
is a bijection between sets of arrows of $\Hom(\mathbb{Z},G)$ and
$\Lambda G$. 
\end{itemize}
It is clear that the bijection above is compatible with all structure maps, leading to an isomorphism of groupoids from $\Hom(\mathbb{Z},G)$ to $\Lambda G$.
\end{proof}
In the sequel, we will identify a functor $\alpha$ from $\mathbb{Z}$ to $G$ with $\alpha(1)\in SG$; furthermore, an arrow $(a,g)$ in $SG\rtimes G$ will be identified with a natural transformation $a\Leftarrow g^{-1}ag$. As a consequence, there is a natural action of $\mathbb{Z}$ (from left) on the space of arrows
$\Hom(\mathbb{Z},G)\cong SG\rtimes G$: the action of $k\in\mathbb{Z}$ on an arrow
$(a,g):a\Leftarrow g^{-1}ag$ is an arrow $(a,a^kg):\ a\Leftarrow(a^{k}g)^{-1}a(a^{k}g)=g^{-1}ag$.
Since $l\cdot(k\cdot(a,g))=l\cdot(a,a^{k}g)=(a,a^{l}a^{k}g)=(l+k)\cdot(a,g)$,
the action is well-defined.

\begin{Rem}
We have ingored the groupoid structure of $\Hom(\mathbb{Z},G)\cong \Lambda G=SG\rtimes G$ in the action defined above. A natural way to extend it to an action of $\mathbb{Z}$ (viewed as a groupoid) on the groupoid $\Lambda G$ is to assume the action on the objects to be identity maps. However, it fails to be an action "by automorphisms", because of the following inequality:
\[ (k\cdot (a,g))(k\cdot (g^{-1}ag,h))\neq k\cdot ((a,g)(g^{-1}ag,h)) .\]
\end{Rem}

Nevertheless, we have the following lemma:
\begin{lem}
The natural map $\rho$ from the product groupoid
$\mathbb{Z}\times \Lambda G\rightrightarrows SG$
to the inertia groupoid $\Lambda G\rightrightarrows SG$
defined by $\rho(k,(a,g))=(a,a^kg)$
is a groupoid homomorphism:
\[ \xymatrix{\mathbb{Z}\times \Lambda G\ar[r]^{\rho}\ar@<-.5ex>[d]\ar@<.5ex>[d] & \Lambda G\ar@<-.5ex>[d]\ar@<.5ex>[d]\\
SG\ar[r]^{\id} & SG
} \]
\end{lem}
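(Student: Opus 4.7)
The plan is to verify directly that $\rho$ respects the groupoid structure maps (source, target, unit, multiplication); the compatibility with inverses will then follow automatically from these. Since the base map is the identity on $SG$, it suffices to check that $\rho$ on arrows intertwines the source/target with the identity, sends units to units, and preserves composition.

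First I would check source and target. An arrow $(a,g)$ of $\Lambda G=SG\rtimes G$ has target $a$ and source $a^g=g^{-1}ag$, and the product groupoid $\mathbb{Z}\times\Lambda G$ inherits these componentwise (the $\mathbb{Z}$-factor having a single object). For $\rho(k,(a,g))=(a,a^k g)$ one then computes
\[
t(a,a^k g)=a,\qquad s(a,a^k g)=(a^k g)^{-1}a(a^k g)=g^{-1}a^{-k}a\,a^k g=g^{-1}ag,
\]
which agree with the target and source of $(k,(a,g))$. The unit at $a\in SG$ in $\mathbb{Z}\times\Lambda G$ is $\bigl(0,(a,1_a)\bigr)$, and $\rho\bigl(0,(a,1_a)\bigr)=(a,a^0\cdot 1_a)=(a,1_a)$ is the unit at $a$ in $\Lambda G$.

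The main computation, and the only step that uses the conjugation relation in a non-trivial way, is preservation of multiplication. Given composable arrows $(k,(a,g))$ and $(l,(b,h))$ in $\mathbb{Z}\times\Lambda G$, composability forces $b=g^{-1}ag$, and the product is $(k+l,(a,gh))$, so
\[
\rho\bigl((k,(a,g))\cdot(l,(b,h))\bigr)=(a,a^{k+l}gh).
\]
On the other hand,
\[
\rho(k,(a,g))\cdot\rho(l,(b,h))=(a,a^k g)\cdot(b,b^l h)=(a,a^k g\, b^l h).
\]
Using $b=g^{-1}ag$, I would compute $g\,b^l=g\,(g^{-1}ag)^l=a^l g$, which gives $a^k g\,b^l h=a^{k+l}gh$, matching the previous expression. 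This identity $g b^l=a^l g$ is the heart of the argument.

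I expect no conceptual obstacle; the main point where care is needed is tracking the left/right conventions for the action in $\Lambda G$ and for conjugation, so that the computation $g(g^{-1}ag)^l=a^l g$ comes out correctly. Once the multiplication check succeeds, the preservation of inverses is automatic: $\rho$ maps $(k,(a,g))^{-1}=(-k,(a,g)^{-1})$ to an arrow which, composed with $\rho(k,(a,g))$, yields the unit at $a$, hence must be $\rho(k,(a,g))^{-1}$.
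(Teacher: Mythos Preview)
Your proof is correct and follows essentially the same approach as the paper: the key step is the multiplicativity check, which reduces to the conjugation identity $g(g^{-1}ag)^l=a^l g$, exactly as in the paper's computation. The only structural difference is that the paper does not prove this lemma in isolation but instead establishes the more general crossed-module version (Lemma~\ref{groupoidhom}) and obtains this one as the special case $N=SG$, $\varphi=i$; your additional explicit checks of source, target, and units are correct but are left implicit in the paper's ``it suffices to prove'' reduction.
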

In fact, we have the following more general result for crossed modules of groupoids:
\begin{lem}\label{groupoidhom}
Let $N\xrightarrow{\varphi}G$ be a crossed module of groupoids, and let $N\rtimes G\rightrightarrows N$ be the corresponding crossed product groupoid.
Then the natural map $\rho$ from the product groupoid
$\mathbb{Z}\times (N\rtimes G)\rightrightarrows N$
to $N\rtimes G\rightrightarrows N$
defined by $\rho(k,(x,g))=(x,\varphi(x^k)g)$
is a groupoid homomorphism:
\[ \xymatrix{\mathbb{Z}\times (N\rtimes G)\ar[r]^{\rho}\ar@<-.5ex>[d]\ar@<.5ex>[d] & N\rtimes G\ar@<-.5ex>[d]\ar@<.5ex>[d]\\
N\ar[r]^{\id} & N
} \]
\end{lem}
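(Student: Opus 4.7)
Since $\rho$ is the identity on the base $N$, it suffices to verify that $\rho$ is well-defined on arrows and that it intertwines source, target, units, and multiplication. The first four compatibilities are essentially formal; the substance lies in multiplicativity, where both crossed module axioms enter.

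First I would verify well-definedness together with target and unit compatibility. Since $s(x)=t(x)$ for every $x \in N$, the element $x^k$ lies in the isotropy group of $N$ at $t(x)$, so $\varphi(x^k)$ is a loop in $G$ based at $t(x)$; composing with $g$ (which has $t(g)=t(x)$) therefore produces a genuine arrow of $N \rtimes G$ with target $x$. Unit preservation follows from $\varphi(x^0)=1_{t(x)}$. For source preservation I would compute $s(\rho(k,(x,g)))=x^{\varphi(x^k)g}=(x^{\varphi(x^k)})^g$; crossed module axiom~(2) applied to the composable pair $(x, x^k)$ gives $x^{\varphi(x^k)}=x^{x^k}=(x^k)^{-1}x\,x^k=x$, using that $x$ commutes with its own powers in the group fiber. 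Hence $s(\rho(k,(x,g)))=x^g$, matching the source in the product groupoid.

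The heart of the proof is multiplicativity. A composable pair in $\mathbb{Z}\times(N\rtimes G)$ is forced to have the form $(k,(x,g))$ and $(l,(x^g,g'))$, with product $(k+l,(x,gg'))$. The identity to establish is
\[ \bigl(x, \varphi(x^k)\,g\bigr) \cdot \bigl(x^g, \varphi((x^g)^l)\,g'\bigr) = \bigl(x, \varphi(x^{k+l})\,gg'\bigr). \]
Applying the multiplication rule of $N\rtimes G$ and using $\varphi(x^{k+l})=\varphi(x^k)\varphi(x^l)$, this reduces to $g\,\varphi((x^g)^l)=\varphi(x^l)\,g$. Since $G$ acts on $N$ by automorphisms one has $(x^g)^l=(x^l)^g$, and then axiom~(1) gives $\varphi((x^l)^g)=\varphi(x^l)^g=g^{-1}\varphi(x^l)g$; substituting yields the desired equality.

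The only real obstacle is bookkeeping: one must keep the two superscript conventions distinct (the $G$-action $x^g$ versus the integer power $x^k$), and recognise that the crucial collapse $x^{\varphi(x^k)}=x$ is precisely axiom~(2) applied to a pair of commuting elements of the isotropy group. Once these two observations are in place, each verification is a short calculation inside a single group fiber.
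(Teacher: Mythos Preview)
Your argument is correct and follows essentially the same route as the paper's own proof: both reduce the claim to the multiplicativity identity and verify it via the crossed module axioms, with axiom~(1) handling the conjugation $\varphi((x^l)^g)=g^{-1}\varphi(x^l)g$. Your version is in fact slightly more complete, since you explicitly check source compatibility (i.e.\ $x^{\varphi(x^k)}=x$ via axiom~(2)), which the paper uses implicitly when multiplying $(x,\varphi(x^k)g)\cdot(x^g,\varphi((x^g)^l)h)$ without comment.
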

\begin{proof}
It suffices to prove the following:
\begin{equation}\label{homomorphism}
\rho((k,(x,g))(l,(x^g,h)))=\rho((k,(x,g)))\rho((l,(x^g,h))).
\end{equation}
By definition, the left hand side of Equation~\eqref{homomorphism}
is equal to
\[ \rho((k,(x,g))(l,(x^g,h))) = \rho((k+l,(x,gh)))
= (x,\varphi(x^{k+l})gh) = (x,\varphi(x)^{k+l}gh) ,\]
while the right hand side is equal to
\begin{multline*} \rho((k,(x,g)))\rho((l,(x^g,h)))
= (x,\varphi(x^k)g)(x^g,\varphi((x^g)^l)h)
= (x,\varphi(x^k)g\varphi((x^g)^l)h) \\
= (x,\varphi(x^k)g(\varphi(x^g))^lh)
= (x,\varphi(x^k)g(\varphi(x)^g)^lh
= (x,\varphi(x)^kg(g^{-1}\varphi(x)g)^lh \\
= (x,\varphi(x)^kg(g^{-1}\varphi(x)^lg)h
= (x,\varphi(x)^k\varphi(x)^lgh
= (x,\varphi(x)^{k+l}gh)
.\end{multline*}
They are equal to each other. Thus $\rho$ is a groupoid homomorphism.
\end{proof}

\section{A new way to construct the transgression map}

In this section, we will give a new construction of a transgression
map
\[ \Phi:\ H^{\ast}(G_\bullet,\mathbb{Z})\rightarrow H^{\ast-1}((N\rtimes G)_\bullet,\mathbb{Z}) .\]
Finally we will see that it coincides with the transgression map $T_1$ given by Tu-Xu~\cite{TuXu}.

Suppose $N\xrightarrow{\varphi}G$ is a crossed module of groupoids, and let
$N\rtimes G\rightrightarrows N$ be the corresponding crossed product groupoid. We have
a sequence of cohomology group homomorphisms as follows (for simplicity, the coefficient group $\mathbb{Z}$ is omitted):
\[
H^{\ast}(G_\bullet)\xrightarrow{\pi^{*}}H^{\ast}((N\rtimes G)_\bullet)\xrightarrow{\rho^{*}}H^{\ast}((\mathbb{Z}\times(N\rtimes G))_\bullet)\xrightarrow{\cong}\bigoplus_{i+j=\ast}H^{i}(\mathbb{Z}_\bullet)\otimes H^{j}((N\rtimes G)_\bullet)\xrightarrow{\wedge\xi\otimes \id}H^{\ast-1}((N\rtimes G)_\bullet)
\]
where
\begin{itemize}
\item $\pi:\ N\rtimes G\rightarrow G$ is the projection: $\pi(x,g)=g$;
\item $\rho$ is the natural groupoid homomorphism defined in Lemma \ref{groupoidhom};
\item $H^{\ast}((\mathbb{Z}\times(N\rtimes G))_\bullet)\stackrel{\cong}{\longrightarrow}\bigoplus_{i+j=\ast}H^{i}(\mathbb{Z}_\bullet)\otimes H^{j}((N\rtimes G)_\bullet)$
is an application of Eilenberg-Zilber Theorem and K\"unneth formula---see~\cite{Weibel};
\item $\wedge\xi$ is the operation of doing cap product with the generator
$[\xi]$ of $H_{1}(\mathbb{Z}_\bullet)$($=\mathbb{Z}$, as shown in Lemma \ref{Zcohomology}), so $\wedge\xi\otimes \id$ can be thought of as the operation of extracting degree $(\ast -1)$ component from $H^{j}((N\rtimes G)_\bullet)$.
\end{itemize}

\begin{lem}\label{Zcohomology}
  The singular cohomology of $\mathbb{Z}$, viewed as a groupoid, is
  \[ H^i(\mathbb{Z}_\bullet)=\begin{cases}
    \mathbb{Z} & \text{if }i\in\{0,1\}, \\
    0 & \text{otherwise}.
  \end{cases} \]
\end{lem}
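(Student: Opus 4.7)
The plan is to exploit the discreteness of $\mathbb{Z}$ as a (zero-dimensional) manifold: each nerve component $\mathbb{Z}_p=\mathbb{Z}^p$ is a discrete set, so every smooth map $\Delta^q\to\mathbb{Z}^p$ is constant. Consequently $S_q^\infty(\mathbb{Z}^p)$ is naturally in bijection with $\mathbb{Z}^p$ for every $q\geq 0$, and each face operator $\overline{\varepsilon}^i$ on the simplicial set $S_\bullet^\infty(\mathbb{Z}^p)$ acts as the identity. This reduction is what makes the calculation routine.

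First I would collapse the vertical direction. With every $\overline{\varepsilon}^i$ equal to the identity, the vertical differential $\delta=\sum_{i=0}^q(-1)^i\overline{\varepsilon}^i$ on $C_\bullet(\mathbb{Z}^p,\mathbb{Z})$ becomes multiplication by $\sum_{i=0}^q(-1)^i$, which vanishes for odd $q$ and equals $1$ for even $q\geq 2$. A direct check then shows the resulting chain complex is acyclic in positive degrees with $H_0=\mathbb{Z}[\mathbb{Z}^p]$; dualizing, each column of the double cochain complex $C^\bullet(\mathbb{Z}_\bullet,\mathbb{Z})$ has vertical cohomology $\mathbb{Z}^{\mathbb{Z}^p}$ concentrated in degree $q=0$.

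Next I would run the first-quadrant spectral sequence of the double complex obtained by taking vertical cohomology first. Since $E_1^{p,q}$ is supported in the single row $q=0$, the sequence degenerates at $E_2$, and the total cohomology $H^*(\mathbb{Z}_\bullet,\mathbb{Z})$ is identified with the cohomology of the horizontal complex
\[ \mathbb{Z}\xrightarrow{\partial^*}\mathbb{Z}^\mathbb{Z}\xrightarrow{\partial^*}\mathbb{Z}^{\mathbb{Z}^2}\xrightarrow{\partial^*}\cdots, \]
where $\partial^*=\sum_{j=0}^p(-1)^j(\overline{\varepsilon}^j)^*$ is dual to the face maps of the nerve of $\mathbb{Z}$. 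This is precisely the inhomogeneous bar complex computing the group cohomology $H^*(\mathbb{Z},\mathbb{Z})$ of $\mathbb{Z}$ with trivial $\mathbb{Z}$-coefficients.

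Finally I would read off $H^*(\mathbb{Z},\mathbb{Z})$ from the length-one free resolution $0\to\mathbb{Z}[\mathbb{Z}]\xrightarrow{1-t}\mathbb{Z}[\mathbb{Z}]\to\mathbb{Z}\to 0$ of the trivial module; applying $\Hom_{\mathbb{Z}[\mathbb{Z}]}(-,\mathbb{Z})$ produces the two-term complex $\mathbb{Z}\xrightarrow{0}\mathbb{Z}$, giving $H^0=H^1=\mathbb{Z}$ and vanishing in higher degrees (equivalently, $B\mathbb{Z}\simeq S^1$). There is no real obstacle: the one substantive point is verifying that zero-dimensionality of $\mathbb{Z}$ collapses each vertical column of the double complex to a single group in degree zero, after which the hypercohomology computation reduces to a classical one.
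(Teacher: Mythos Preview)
Your argument is correct. The paper actually states this lemma without proof, so there is nothing to compare against; you have supplied what the paper omits. The key observation---that each $\mathbb{Z}_p=\mathbb{Z}^p$ is discrete, hence every smooth singular simplex is constant and every vertical face map acts as the identity---is exactly right, and it immediately collapses each column of the double complex to $\mathbb{Z}^{\mathbb{Z}^p}$ in vertical degree zero. The remaining horizontal complex is the inhomogeneous bar complex for $H^*(\mathbb{Z},\mathbb{Z})$, and your computation via the two-term free resolution (equivalently, $B\mathbb{Z}\simeq S^1$) is standard and correct.
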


The composition of the maps in the sequence is denoted by $\Phi:\ H^{\ast}(G_\bullet)\rightarrow H^{\ast-1}((N\rtimes G)_\bullet)$,
and called the \emph{transgression map} associated to the crossed
module $N\xrightarrow{\varphi}G$.

Our goal is to give an explicit formula for $\Phi$
on the cochain level.

By abbreviating $C^q(G_p;\mathbb{Z})$ as $C^q(G_p)$, $\Phi$ on the level of total complex is given by the composition of the following sequence
\begin{multline*}
\tot_{n}C^{q}(G_{p})\xrightarrow{\pi^{*}}\tot_{n}C^{q}((N\rtimes G)_{p})\xrightarrow{\rho^{*}}\tot_{n}C^{q}((\mathbb{Z}\times(N\rtimes G))_{p})\\
\xrightarrow{\nabla^{*}}\tot_{n}\bigoplus_{\substack{i+j=q \\ k+l=p}}C^{i}(\mathbb{Z}_{k})\otimes C^{j}((N\rtimes G)_{l})\xrightarrow{\wedge\xi\otimes id}\tot_{n}C^{q}((N\rtimes G)_{p-1})
,\end{multline*}
or component-wise, $\Phi$ is indeed the composition of the following horizontal sequence:
\[
C^{q}(G_{p})\xrightarrow{\pi^{*}}C^{q}((N\rtimes G)_{p})\xrightarrow{\rho^{*}}C^{q}((\mathbb{Z}\times(N\rtimes G))_{p})\xrightarrow{\nabla^{*}}\bigoplus_{\substack{i+j=q \\ k+l=p}}C^{i}(\mathbb{Z}_{k})\otimes C^{j}((N\rtimes G)_{l})\xrightarrow{\wedge\xi\otimes id}C^{q}((N\rtimes G)_{p-1}),
\]
where $\nabla^{*}$ is induced by the Eilenberg-MacLane map.
Note that $\wedge\xi\otimes \id$ is only nontrivial for the component
\[ C^q(\mathbb{Z}_{1}\otimes (N\rtimes G)_{p-1})\cong C^{0}(\mathbb{Z}_{1})\otimes C^{q}((N\rtimes G)_{p-1}) ,\]
and the restriction of $\wedge\xi\otimes\id$ to this component is precisely the map
\[ \iota^{*}:\ C^{q}(\mathbb{Z}_{1}\times(N\rtimes G)_{p-1})\rightarrow C^{q}((N\rtimes G)_{p-1}) \]
dual to the inclusion map
\[ \iota:\ (N\rtimes G)_{p-1} \rightarrow\mathbb{Z}\times(N\rtimes G)_{p-1} \]
sending $-$ to $(1,-)$.
So we can reorganize the sequence of cochain complex maps as:
\[
C^{q}(G_{p})\stackrel{\pi^{*}}{\longrightarrow}C^{q}((N\rtimes G)_{p})\stackrel{\rho^{*}}{\longrightarrow}C^{q}((\mathbb{Z}\times(N\rtimes G))_{p})\xrightarrow{\nabla_{1,p-1}^{*}}C^{q}(\mathbb{Z}_{1}\times(N\rtimes G)_{p-1})\xrightarrow{\iota^{*}}C^{q}((N\rtimes G)_{p-1})
\]
where $\nabla_{1,p-1}^\ast$ is the composition of the projection from $\bigoplus_{i+j=q,k+l=p}C^{i}(\mathbb{Z}_{k})\otimes C^{j}((N\rtimes G)_{l})$ to $C^q(\mathbb{Z}_{1}\otimes (N\rtimes G)_{p-1})$ with $\nabla^\ast$, and it is precisely dual to the $(1,p-1)$ component of Eilenberg-MacLane map $\nabla:\ \bigoplus_{k+l=p}C_{q}(\mathbb{Z}_{k}\times(N\rtimes G)_{l})\rightarrow C_{q}((\mathbb{Z}\times(N\rtimes G))_{p})$.

Since $C_q(-)$ means the freely generated module, $\nabla_{1,p-1}$ is indeed equivalent to a map $\phi$ from $\mathbb{Z}_1\times(N\rtimes G)_{p-1}$ to $(\mathbb{Z}\times(N\rtimes G))_{p}$. Applying the formular for Eilenberg-MacLane maps, we obtain
\[
\phi=\sum_{\mu\in\sh(1,p-1)}(-1)^{\mu}\sigma_{\mu(p)}^{h}\cdots\sigma_{\mu(2)}^{h}\sigma_{\mu(1)}^{v}:\ \mathbb{Z}_{1}\times(N\rtimes G)_{p-1}\rightarrow(\mathbb{Z}\times(N\rtimes G))_{p},
\]
where $\sigma_{j}^h=\overline{\eta}^j$ are degeneracy operators of the nerve $\mathbb{Z}_\bullet$ and $\sigma_{i}^v=\overline{\eta}^i$ is a degeneracy operator of the nerve $(N\rtimes G)_\bullet$.

Summarizing the discussion above, $\Phi$ is the dual of the composition of the following sequence of maps:
\[
(N\rtimes G)_{p-1}\xrightarrow{\iota}\mathbb{Z}_{1}\times(N\rtimes G)_{p-1}\xrightarrow{\phi}(\mathbb{Z}\times(N\rtimes G))_{p}\xrightarrow{\rho}(N\rtimes G)_{p}\xrightarrow{\pi}G_{p},
\]
i.e.\ $\Phi=(\pi\circ\rho\circ\phi\circ\iota)^{*}$.

Note that a $(1,p-1)$-shuffle $\mu$ is uniquely determined by $\mu(1)$.
By replacing the subscripts, we obtain:
\[
\phi=\sum_{\mu\in\sh(1,p-1)}(-1)^{\mu}\sigma_{\mu(p)}^{h}\cdots\sigma_{\mu(2)}^{h}\sigma_{\mu(1)}^{v}=\sum_{i=1}^p(-1)^{i}\sigma_{p}^{h}\cdots\hat{\sigma}_{i}^{h}\cdots\sigma_{1}^{h}\sigma_{i}^{v}.
\]

Let $\phi_{i}:=\sigma_{p}^{h}\cdots\hat{\sigma}_{i}^{h}\cdots\sigma_{1}^{h}\sigma_{i}^{v}$,
then for any $(x,g_{1},\cdots,g_{p-1})\in N\rtimes G_{p-1}\cong(N\rtimes G)_{p-1}$,
\begin{multline*}
\phi_{i}(1,x,g_{1},\cdots,g_{p-1})=\sigma_{p}^{h}\cdots\hat{\sigma}_{i}^{h}\cdots\sigma_{1}^{h}\sigma_{i}^{v}(1,x,g_{1},\cdots,g_{p-1})
\\
=(0,\cdots,0,1,0,\cdots,0,x,g_{1},\cdots,g_{i-1},e_{x^{g_{1}\cdots g_{i-1}}},g_{i},\cdots,g_{p-1})
\end{multline*}
where $e_{x^{g_{1}\cdots g_{i-1}}}$ is the identity arrow
in $G$ at $s(\varphi(x^{g_{1}\cdots g_{i-1}}))\in G_0$. For simplicity, we will denote $x^{g_{1}\cdots g_{k}}$  by $x_{k}$.

For an intuitive understanding of the composition of degeneracy operators:
\[
\sigma_{i}^{v}(x,g_{1},\cdots,g_{p-1}):\ \bullet\xleftarrow{g_{1}}\bullet\xleftarrow{g_{2}}\bullet\cdots\bullet\xleftarrow{g_{i-1}}\bullet\overset{e}{\dashleftarrow}\bullet\xleftarrow{g_{i}}\bullet\cdots\bullet\xleftarrow{g_{p-1}}\bullet
\]
is to insert an identity arrow at the $i$-th object;
\[
\sigma_{n}^{h}\cdots\hat{\sigma}_{i}^{h}\cdots\sigma_{1}^{h}(1):\ \bullet\stackrel{0}{\dashleftarrow}\bullet\cdots\bullet\stackrel{0}{\dashleftarrow}\bullet\stackrel{1}{\longleftarrow}\bullet\stackrel{0}{\dashleftarrow}\bullet\cdots\bullet\stackrel{0}{\dashleftarrow}\bullet
\]
is to insert $i-1$ identity arrows on the left and $p-i$ identity
arrows on the right.

With the notations above, the computation can be proceeded as follows
\begin{align*}
   &\quad\rho\circ\phi_{i}\circ\iota(x,g_{1},\cdots,g_{p-1})\\
   &=\rho\circ\phi_{i}(1,x,g_{1},\cdots,g_{p-1})\\
   &=(\rho(0,x,g_{1}),\rho(0,x_1,g_{2}),\cdots,\rho(0,x_{i-2},g_{i-1}),\rho(1,x_{i-1},e_{x_{i-1}}),\rho(0,x_{i-1},g_{i}),\cdots,\rho(0,x_{p-2},g_{p-1}))\\
   &=((x,g_{1}),(x_1,g_{2}),\cdots,(x_{i-2},g_{i-1}),(x_{i-1},\varphi(x_{i-1})\cdot e_{x_{i-1}}),(x_{i-1},g_{i}),\cdots,(x_{p-2},g_{p-1})).
\end{align*}
Let $f_i$ be the composition $\pi\circ\rho\circ\phi_{i}\circ\iota$, then
\begin{align*}
  f_i(x,g_{1},\cdots,g_{p-1})&=\pi\circ\rho\circ\phi_{i}\circ\iota(x,g_{1},\cdots,g_{p-1})\\
  &=(g_{1},\cdots, g_{i-1},\varphi(x_{i-1}),g_{i},\cdots,g_{p-1})\\
   &=(g_{1},\cdots, g_{i-1},\varphi(x)^{g_{1}\cdots g_{i-1}},g_{i},\cdots,g_{p-1})
\end{align*}
As a result, we have
\[
\Phi=(\pi\circ\rho\circ\phi\circ\iota)^{*}=(\sum_{i=1}^{p}(-1)^{i}\pi\circ\rho\circ\phi_{i}\circ\iota)^{*}=\sum_{i=1}^{p}(-1)^{i}f_{i}^{*}.
\]
We see that it is identical to the transgression map (with a sign difference) in Theorem \ref{TuXuResult}, given by Tu-Xu
in~\cite{TuXu}.

\end{document}